\newtheorem{thm}{Theorem}[section]
\newtheorem{cor}[thm]{Corollary}
\newtheorem{prop}[thm]{Proposition}
\newtheorem{lem}[thm]{Lemma}
\theoremstyle{definition}
\newtheorem{defn}[thm]{Definition}
\newtheorem{defns}[thm]{Definitions}
\newcommand{\Lip}[0]{\mathrm{Lip}}
\newcommand{\Int}[0]{\mathrm{Int \hspace{3pt}}}
\newcommand{\Stab}[1]{\ensuremath{W_\varepsilon^s(#1)}}
\newcommand{\Unstab}[1]{\ensuremath{W_\varepsilon^u(#1)}}
\newcommand{\Hol}[0]{H\"{o}lder }
\title {On Ruelle's Lemma and Ruelle Zeta Functions}
\author{Paul Wright} 
\date{\today}
\begin{document}
\maketitle

\begin{abstract}

In this article we prove an important inequality regarding the Ruelle operator in hyperbolic flows. This was already proven briefly by Mark Pollicott and Richard Sharp in a low dimensional case \cite{Pollicott-Sharp}, but we present a clearer proof of the inequality, filling in gaps and explaining the ideas in more detail, and extend the inequality to higher dimensional flows. This inequality is necessary to prove a proposition about the analyticity of Ruelle zeta functions.\footnote[1]{This article represents the main result of the author's honours dissertation at the University of Western Australia, supervised by Luchezar Stoyanov.}

\end{abstract}

\section{Introduction}

This article gives a comprehensive and rigourous proof of a lemma by Mark Pollicott and Richard Sharp (called ``Lemma 2'' in \cite{Pollicott-Sharp}). They claim that the result was essentially proved in a paper by David Ruelle \cite{Ruelle}, but Ruelle's paper does not state or prove the lemma explicitly. Another proof of the lemma can be found in a paper by Fr\'{e}d\'{e}ric Naud \cite{Naud}, which clarifies some issues with the proof in \cite{Pollicott-Sharp} but contains a new error. A similar but weaker result was also proven in \cite{Pollicott-Sharp 2}. The three papers \cite{Naud, Pollicott-Sharp, Pollicott-Sharp 2} each prove the lemma briefly for low dimensional flows but each proof has significant gaps and even errors. In this article the proof follows Naud's approach more closely than the original Pollicott-Sharp proof. We recover the original result from \cite{Pollicott-Sharp}, but fill in gaps in the proof and extend the result to higher dimensional flows.


Let M be a compact smooth manifold and let $\phi_t: M \rightarrow M$ be an Axiom A flow \cite{Parry-Pollicott}. Then $M$ contains a non-wandering set that can be decomposed into basic sets $\Lambda_i \subset M$ on which $\phi_t$ is hyperbolic and transitive, its periodic points are dense and there exists a neighbourhood $U \supset \Lambda_i$ with $\Lambda_i = \displaystyle \bigcap^\infty_{n=-\infty} f^n(U)$. See \cite{Parry-Pollicott} for full definitions. We use the fact that the periodic points are dense in $\Lambda$ near the end of the paper. We focus on a single basic set $\Lambda$. 

For any $x \in M$ let $W^s_\varepsilon(x), W^u_\varepsilon(x)$ be the local stable and unstable manifolds through $x$ respectively, as defined in \cite{Parry-Pollicott}. We call a subset $A$ of a stable or unstable manifold \textit{admissible} if $A = \overline{\Int A}$, where the closure and interior are in the induced topology of the local stable or unstable manifold intersected with $\Lambda$. A \textit{Markov rectangle} $R_i$ can be constructed from a point $z_i \in \Lambda$ and admissible subsets $U_i \subset W_\varepsilon^u(z_i) \cap \Lambda$ and $S_i \subset W_\varepsilon^s(z_i) \cap \Lambda$ using a local product structure on $\Lambda$ \cite{Dolgopyat 1, Parry-Pollicott}. The interior of a rectangle $\Int R_i$ can be similarly constructed from $\Int U_i$ and $\Int S_i$. A Markov family of rectangles $R_1, \ldots, R_k \subset \Lambda$ can be constructed as described in \cite{Dolgopyat 1, Parry-Pollicott}, and we let $R = \bigcup_{i=1}^k R_i$. 


In \cite{Pollicott-Sharp}, the stable and unstable leaves are always one-dimensional and each $U_i$ is called an interval. We are working with higher dimensional flows, so the unstable leaves could in general have a higher dimension. We call each $U_i$ a \textit{Markov leaf}. The set of Markov leaves is disjoint. We denote $U = \bigcup_{i=1}^k U_i$ and $\Int U = \bigcup_{i=1}^k \Int U_i$.
 
Define the Poincar\'{e} map $H: R \rightarrow R$ by $H(x) = \phi_{r(x)}(x) \in R$, where $r(x)>0$ is the \textit{first return time}, the smallest positive time such that $\phi_{r(x)}(x) \in R$. 

Define a $k \times k$ matrix $A$ by

\begin{displaymath}
   A(i,j) = \left\{
     \begin{array}{lr}
       1  : H(\Int R_i) \cap \Int R_j \neq \emptyset \\
       0  : \mbox{otherwise.}
     \end{array}
   \right.
\end{displaymath}

We use this matrix to define the symbol spaces $X_A$ and $X_A^+$
$$X_A = \{\xi = \{ \xi_n \}^\infty_{n=-\infty}: \xi_n \in \{1, \ldots , k \}, A(\xi_n, \xi_{n+1}) = 1, \forall n \in \mathbb{Z}\}$$
$$X^+_A = \{x = \{ \xi_n \}^\infty_{n=0 \hspace{9 pt}}: \xi_n \in \{1, \ldots , k \}, A(\xi_n, \xi_{n+1}) = 1, \forall n \geq 0 \}.$$

It is always possible to construct the Markov family such that the matrix $A$ is \textit{irreducible} and \textit{aperiodic} \cite{Parry-Pollicott}.

Define the \textit{stable holonomy map} $\pi^u : R \rightarrow U$ by $\pi^u(x) = W_\varepsilon^s(x) \cap U_i$. Define the expanding map $f: U \rightarrow U$ by $f = \pi^u \circ H|_U$. This map is expanding in the sense that there exist constants $0 < \gamma < 1$ and $C_1>0$ such that, if $f^j(x)$ and $f^j(y)$ are on the same Markov leaf $U_{i_j}$ for every $0 \leq j \leq m$, we have 
\begin{equation} \label{expanding map}
d(x,y) \leq C_1 \gamma^m d(f^m x,f^m y).
\end{equation}

\section{Preliminaries}

\subsection{Cylinders}

\begin{defn}
For each $n \geq 0$, consider all strings of length $n$ of symbols $1, 2, \ldots, k$. We call a string $\alpha = (\alpha_0,\ldots, \alpha_{n-1})$ a \textit{word} if it is admissible under $A$, that is, if $A(\alpha_j,\alpha_{j+1}) = 1$ for all $0 \leq j \leq n-2$. For any word $\alpha$ we write $|\alpha| = n$ and define a subset of one of the Markov leaves $$U_\alpha = U_{\alpha_0} \cap f^{-1} U_{\alpha_1} \cap \ldots \cap f^{-(n-1)} U_{\alpha_{n-1}}.$$
Here $f^{-1}$ denotes the preimage. We call this subset a \textit{cylinder} of length $n$ in the leaf $U_{\alpha_0}$. Each $U_i$ is a cylinder of length $1$ and is always a compact set, but other cylinders can be open, closed, or neither. If $|\alpha| = |\beta| > 1$ then either $\alpha = \beta$ or $U_\alpha \cap U_\beta = \emptyset$. This is not true in general for the closures $\overline{U_\alpha}$ and $\overline{U_\beta}$. A cylinder $U_\alpha$ is always nonempty, and as the length of a string approaches infinity, the corresponding cylinder approaches a single point. Thus a single point in $U$ can be represented by an infinite string of symbols, i.e. an element of $X^+_A$.
\end{defn}

\begin{defns}
Let $\alpha = (\alpha_0, \ldots, \alpha_{n-1})$ be a word of length $n$ and let $i =1, \ldots, k$. Then if $A(\alpha_{n-1},i) = 1$ and $A(i,\alpha_0)=1$, we define
$$\alpha i = (\alpha_0, \ldots, \alpha_{n-1}, i), \hspace{8pt} i \alpha = (i, \alpha_0, \ldots, \alpha_{n-1}), \mbox{ and } \overline{\alpha} = (\alpha_0, \ldots, \alpha_{n-2}).$$

In general the expanding map $f^n$ is not always injective for $n \geq 1$. For each permissible word $\alpha$ we define an inverse that follows the word backwards. This is always possible for points on the interior of a leaf. Let $\alpha$ be a word of length $n$ and let $x \in f(U_{\alpha_{n-1}}) \cap \Int U_i$ for any $i$ with $A(\alpha_{n-1}, i)=1$. Define $f_\alpha^{-1}(x)$ to be the unique point $y$ such that $f^n(y) = x$ and $y \in U_\alpha$. For a single symbol $i$ we can write $ix = f_i^{-1}(x)$. 
\label{word inverse}
\end{defns}

\subsection{\Hol functions}

\begin{defn}
For any $A \subseteq U$, denote by $C(A)$ the set of all continuous functions $w: A \rightarrow \mathbb{C}$, and let $w \in C(A)$. For any $0< \mu \leq 1$, we say that $w$ is $\mu$-\textit{\Hol} on $A$, or \Hol continuous on $A$ with exponent $\mu$, if there exists $C_2 >0$ such that $|w(x) - w(y)| \leq C_2 d(x,y)^\mu$ for all $x,y \in A \cap U_i$, $i = 1 , \ldots , k$. We define the \textit{$\mu$ norm} of $w$ on $A$ by
$$|w|_\mu = \displaystyle \sup \left\{ \frac{|w(x) - w(y)|}{d(x,y)^\mu} : x,y \in A \cap U_i, i \in \{1 , \ldots , k\}, x \neq y \right\}.$$

When $\mu = 1$ we say $w$ is \textit{Lipschitz} or \textit{Lipschitz continuous}. For any admissible set $A$, we denote by $H^\mu(A)$ the set of all functions that are $\mu$-\Hol on $A$. That is,
$$H^\mu(A) = \{w \in C(U): |w|_\mu < \infty \mbox{ on } A\}.$$


We also define the infinity norm $|w|_\infty = \displaystyle\sup_{x \in U} |w(x)|$ and a total norm
$$\|w\|_\mu = |w|_\mu + |w|_\infty.$$

For an operator $L: C(U) \rightarrow C(U)$ we use the operator norm 
$$\|L\|_\mu = \sup\{ \frac{\|L w\|_\mu}{\|w\|_\mu} : w \in C(U), w \neq 0 \}.$$
\end{defn}
Pollicott and Sharp use a different ($C^1$) norm in their paper \cite{Pollicott-Sharp}. 

If $f: A \rightarrow \mathbb{C}$ is $\mu$-\Hol on some set $A$ in any metric space, then there exists a unique extension ext$(f): \overline{A} \rightarrow \mathbb{C}$ of $f$ to the closure of $A$, such that ext$(f)$ is $\mu$-\Hol on $\overline{A}$ \cite{Lipschitzextension}. We call ext$(f)$ the \textit{\Hol extension} of $f$. 

\begin{lem} \label{exp w}
If $w$ is $\mu$-\Hol on a set containing $x$ and $y$, then
\begin{align*}
|e^{w(x) - w(y)} - 1| & \leq |w(x) - w(y)| e^{|w(x) - w(y)|}\\
											& \leq |w|_{\mu} e^{|w|_{\mu} |U|^{\mu}} d(x,y)^\mu.
\end{align*}

\end{lem}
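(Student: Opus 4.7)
The plan is to prove the two inequalities separately, treating the first as a purely analytic fact about the complex exponential and the second as a routine consequence of the definition of the $\mu$-norm.

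First I would establish the general inequality $|e^{z} - 1| \leq |z| e^{|z|}$ for every complex $z$. The cleanest route is the power series: expand
\[
e^{z} - 1 \;=\; \sum_{n=1}^{\infty} \frac{z^{n}}{n!} \;=\; z \sum_{n=0}^{\infty} \frac{z^{n}}{(n+1)!},
\]
and apply the triangle inequality to obtain
\[
|e^{z} - 1| \;\leq\; |z| \sum_{n=0}^{\infty} \frac{|z|^{n}}{(n+1)!} \;\leq\; |z| \sum_{n=0}^{\infty} \frac{|z|^{n}}{n!} \;=\; |z| e^{|z|},
\]
where in the middle step we used $(n+1)! \geq n!$. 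Setting $z = w(x) - w(y)$ yields the first inequality in the statement.

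For the second inequality, I would invoke the hypothesis that $w$ is $\mu$-Hölder on a set containing $x$ and $y$ (so that $x$ and $y$ lie in a common $A \cap U_i$ where the Hölder estimate defining $|w|_\mu$ applies). By the definition of $|w|_\mu$,
\[
|w(x) - w(y)| \;\leq\; |w|_\mu \, d(x,y)^{\mu}.
\]
Since $d(x,y) \leq |U|$ (the diameter of $U$), this in turn bounds the exponent by $|w|_\mu |U|^\mu$, so $e^{|w(x) - w(y)|} \leq e^{|w|_\mu |U|^\mu}$. Multiplying this with the Hölder estimate and chaining with the first inequality gives the second line of the statement.

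There is essentially no obstacle here; the only point that requires care is that the Hölder bound in the paper's definition is stated only for pairs of points in a common Markov leaf $A \cap U_i$, so one should read the hypothesis of the lemma as ensuring $x$ and $y$ lie in such a common piece. With that interpretation, the proof is a direct two-step estimate.
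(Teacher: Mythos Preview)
Your proof is correct. The paper states this lemma without proof, so there is nothing to compare against; your power-series argument for $|e^{z}-1|\leq |z|e^{|z|}$ together with the H\"older bound and the observation $d(x,y)\leq |U|$ is exactly the standard verification the paper is tacitly assuming.
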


\subsection{Characteristic Function}

For a word $\alpha$ of length $n$ we let $\chi_\alpha$ denote the characteristic function

\begin{displaymath}
\chi_\alpha(x) = \chi_{U_\alpha}(x) = \left\{ 
	\begin{array}{lr}
       1 & \mbox{if } x \in U_\alpha,  \\
       0 & \mbox{otherwise.}
	\end{array}
\right.
\end{displaymath}

Observe that for any word $\alpha$ of length $n$, $\chi_\alpha(x)$ is not Lipschitz on all $U_i$, but it is piecewise Lipschitz. In particular, it is Lipschitz (and hence $\mu$-H\"{o}lder) on $U_\beta$ for any word $\beta$ with $|\beta| \geq |\alpha|$, with Lipschitz and \Hol constants $|\chi_\alpha(x)|_\Lip = |\chi_\alpha(x)|_\mu = 0$. We write
$$\chi_\alpha \in \displaystyle\bigoplus_{|\beta| = n} H^\mu (U_\beta).$$

\subsection{First Return Map}

For $x \in U$, we write $r^n(x) = r(x) + r(f x) + \ldots + r(f^{n-1}x)$. In a three dimensional flow, the first return map $r$ is always Lipschitz continuous on 2-cylinders $U_{i,j}$ for any $i, j \in {1,\ldots, k}$ with $A(i,j) = 1$. However in higher dimensions $r$ is not Lipschitz continuous in general, but it is always \Hol continuous on each $U_{i,j}$ \cite{Hasselblatt}. Henceforth $\mu$ refers to the largest exponent such that $r$ is $\mu$-\Hol on every $U_{i,j}$. Then $|r(x)| \leq |r|_\infty$ and $|r(x) - r(y)| \leq |r|_{\mu} d(x,y)^{\mu}$ for any $x,y$ in the same 2-cylinder.

\begin{lem} \label{r^m}
Whenever $x$ and $y$ are on the same cylinder $U_\alpha$ with $|\alpha| = m$, there exists a constant $B_1$ depending only on $r, C_1, \gamma$ and $\mu$ such that
$$|r^m(x) - r^m(y)| \leq B_1 d(f^{m-1} x,f^{m-1} y)^{\mu}.$$
\end{lem}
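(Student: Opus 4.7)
The plan is a straightforward telescoping argument: write
\[
r^m(x) - r^m(y) = \sum_{j=0}^{m-1}\bigl[r(f^j x) - r(f^j y)\bigr]
\]
and estimate each summand using Hölder continuity of $r$ on $2$-cylinders together with the expanding property \eqref{expanding map}.

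First I would observe that since $x,y \in U_\alpha$ with $|\alpha|=m$, for every $0 \leq j \leq m-1$ the iterates $f^j x, f^j y$ lie in the same Markov leaf $U_{\alpha_j}$, and for $0 \leq j \leq m-2$ they even lie in the common $2$-cylinder $U_{\alpha_j,\alpha_{j+1}}$. On each $2$-cylinder $r$ is $\mu$-Hölder with constant at most $|r|_\mu$, so
\[
|r(f^j x) - r(f^j y)| \leq |r|_\mu \, d(f^j x, f^j y)^\mu, \qquad 0 \leq j \leq m-2.
\]
Next, applying the expanding inequality \eqref{expanding map} to the pair $(f^j x, f^j y)$ — whose forward iterates through time $m-1-j$ share the common leaves $U_{\alpha_j}, U_{\alpha_{j+1}}, \ldots, U_{\alpha_{m-1}}$ — gives
\[
d(f^j x, f^j y) \leq C_1 \gamma^{m-1-j}\, d(f^{m-1}x, f^{m-1}y).
\]
Raising to the $\mu$-th power and combining with the Hölder estimate, the contribution of index $j$ is at most $|r|_\mu C_1^\mu \gamma^{\mu(m-1-j)} d(f^{m-1}x,f^{m-1}y)^\mu$. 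Summing a geometric series in $\gamma^\mu \in (0,1)$ bounds the total by $|r|_\mu C_1^\mu /(1-\gamma^\mu)\cdot d(f^{m-1}x,f^{m-1}y)^\mu$.

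The one genuinely subtle step, and the place I expect the main obstacle, is the terminal term $j=m-1$: the points $f^{m-1}x,f^{m-1}y$ lie in $U_{\alpha_{m-1}}$, but since $|\alpha|=m$ the word imposes no constraint on their next symbol, so a priori they could sit in distinct $2$-cylinders and the direct Hölder bound on $2$-cylinders would fail to apply. My plan is to resolve this by arguing that points close enough inside $U_{\alpha_{m-1}}$ are forced into a common $2$-cylinder by the Markov property (disjointness of rectangle interiors plus the fine partition), so the Hölder bound still gives $|r(f^{m-1}x) - r(f^{m-1}y)| \leq |r|_\mu d(f^{m-1}x,f^{m-1}y)^\mu$; absorbing this into the previous estimate yields $B_1 = |r|_\mu\bigl(1 + C_1^\mu/(1-\gamma^\mu)\bigr)$, which depends only on $r$, $C_1$, $\gamma$, and $\mu$ as required.
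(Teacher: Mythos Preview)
Your approach is exactly the paper's: telescope $r^m$, apply the H\"older bound for $r$ termwise, use \eqref{expanding map} to pull every distance back to $d(f^{m-1}x,f^{m-1}y)$, and sum the geometric series in $\gamma^\mu$. The paper does \emph{not} single out the index $j=m-1$; it simply writes $|r(f^jx)-r(f^jy)|\le |r|_\mu\, d(f^jx,f^jy)^\mu$ for all $0\le j\le m-1$ and arrives at $B_1=|r|_\mu C_1^\mu/(1-\gamma^\mu)$.

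Your extra scruple about the terminal term is more careful than the paper, but the fix you propose does not work as stated. The claim that ``points close enough in $U_{\alpha_{m-1}}$ are forced into a common $2$-cylinder'' is false: the $2$-cylinders $U_{\alpha_{m-1},i}$ tile $U_{\alpha_{m-1}}$, and points on either side of a common boundary can be arbitrarily close while lying in different $2$-cylinders. Moreover nothing forces $f^{m-1}x$ and $f^{m-1}y$ to be close in the first place. So this argument does not yield the H\"older inequality you want at $j=m-1$. The paper simply glosses over this step (in effect using $|r|_\mu$ as if it were a H\"older constant on whole leaves), so to match the paper you should do the same; if you want to genuinely close the gap you will need a different device, not the ``close enough'' argument.
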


\begin{proof}
\begin{align*}
|r^m(x) - r^m(y)| & \leq \displaystyle\sum_{j=0}^{m-1} |r(f^j x) - r(f^j y)|   \leq \displaystyle\sum_{j=0}^{m-1} |r|_\mu d(f^j x, f^j y)^{\mu} \\
			&\leq \displaystyle\sum_{j=0}^{m-1} |r|_{\mu} (\gamma^{m-j-1} C_1 d(f^{m-1} x,f^{m-1} y))^{\mu}\\
			&\leq |r|_{\mu} C_1^{\mu} d(f^{m-1} x,f^{m-1} y)^{\mu} \displaystyle\sum_{i=0}^{m-1} (\gamma^{\mu})^i \\
			&\leq |r|_{\mu} C_1^{\mu} \frac{1}{1-\gamma^{\mu}} d(f^{m-1} x,f^{m-1} y)^{\mu} \\
			&= B_1 d(f^{m-1} x,f^{m-1} y)^{\mu}
\end{align*}
\end{proof}




\subsection{The transfer operator and zeta function}

Let $s = a+ib \in \mathbb{C}$, with $|a| \leq a_0$ and $|b| \geq b_0$ for some constants $a_0, b_0$. Let $h$ denote the \textit{entropy} of the flow $\phi_t$, and for any continuous $w: U \rightarrow \mathbb{R}$ let $P(w)$ denote the \textit{pressure} of $w$. Definitions of these can be found in \cite{entropy}. We have $P(0) = h$.

Let $\hat{U} \subset U$ be the set of points $x \in \Int U$ such that $f^j x \in \Int U$ for any integer $j \geq 0$. We define the \textit{Ruelle transfer operator} $L_{-sr}: C(\hat{U}) \rightarrow C(\hat{U})$ by
$$L_{-sr}w(x) = \displaystyle\sum_{fy=x} e^{-sr(y)} w(y).$$

When $x \notin \hat{U}$, $r$ may not be continuous at $y$ for $f y = x$, and hence $L_{-sr}w$ may not be continuous at $x$. When $L_{-sr}w$ is \Hol continuous on $\hat{U}$, we extend this function to points in $U$ using the \Hol extension, by defining $L_{-sr}w(x) = \mbox{ext}(L_{-sr}w)(x)$. 

Define the zeta function $\zeta(s)$ for the flow $\phi_t$ by
$$\zeta(s) = \displaystyle\prod_\tau (1-e^{-s l(\tau)})^{-1},$$
where the product is over prime periodic orbits $\tau$, i.e. $f^n(x) = x, \forall x \in \tau$ for some $n$, and $l(\tau) = r^n(x)$ for some $x \in \tau$ is the length of the orbit. This zeta function converges to a nonzero analytic function for $\Re(s) > h$, where $h$ is the entropy of the flow $\phi_t$ \cite{Pollicott-Sharp}. It is important to choose $a_0$ large enough that the compact interval $I = [-a_0, a_0]$ contains $h$.

For all $s \in \mathbb{C}$, we define a weighted sum on the periodic points
$$Z_n(-sr) = \displaystyle\sum_{f^n(x)=x} e^{-sr^n(x)}.$$

We can use the fact that
$$\zeta(s) = \exp \displaystyle\sum_{n=1}^\infty \frac{1}{n} Z_n(-sr)$$
(see \cite{Pollicott-Sharp}) to relate the zeta function to the Ruelle transfer operator $L_{-sr}$, using Ruelle's Lemma, which relates $Z_n(-sr)$ to the Ruelle operator $L_{-sr}$.

\section{Ruelle's Lemma}

\begin{thm} (Ruelle's Lemma) \label{main lemma}
For each Markov leaf $U_i$, fix an arbitrary point $x_i \in U_i$. For all $\varepsilon >0, a_0 > 0$ and $b_0 > 0$, there exists $C_\varepsilon >0$ such that

$$
\left|Z_n(-sr) - \displaystyle\sum_{i=1}^k L_{-sr}^n \chi_i (x_i) \right| \leq C_\varepsilon |\Im(s)| \sum_{m=2}^n \left(\|L^{n-m}_{-sr}\|_\mu (\gamma^\mu)^m e^{m (\varepsilon + P(-ar))} \right),
$$
for all $s = a + i b \in \mathbb{C}$ with $|a| \leq a_0$ and $|b| \geq b_0$. Here $\mu$ denotes the \Hol exponent of $r$ and $P(-ar)$ denotes the pressure of $-ar$.

\end{thm}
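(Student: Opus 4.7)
The plan is to express both quantities as sums indexed by closed admissible words of length $n$ and compare them term by term. Unpacking the operator's definition, $L^n_{-sr}\chi_i(x_i) = \sum_{\alpha} e^{-sr^n(y_\alpha)}$ where $\alpha$ ranges over closed words of length $n$ with $\alpha_0 = i$ and $y_\alpha := f_\alpha^{-1}(x_i)\in U_\alpha$ (see Definition~\ref{word inverse}), while $Z_n(-sr) = \sum_\alpha e^{-sr^n(x_\alpha)}$ with $x_\alpha \in U_\alpha$ the unique periodic point. Both $x_\alpha$ and $y_\alpha$ lie in the same cylinder, and applying (\ref{expanding map}) to $f^{n-m}x_\alpha, f^{n-m}y_\alpha$ together with the identities $f^n x_\alpha = x_\alpha$ and $f^n y_\alpha = x_i$ gives the key geometric estimate
$$d(f^{n-m}x_\alpha, f^{n-m}y_\alpha) \leq C_1\gamma^m\, d(x_\alpha, x_i) \leq C_1 \gamma^m |U|\quad \text{for each } m = 1, \ldots, n.$$

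I would then telescope the difference one iterate at a time. With $\Delta_j^\alpha := r(f^jx_\alpha) - r(f^jy_\alpha)$, the H\"older continuity of $r$ combined with the geometric bound above gives $|\Delta_{n-m}^\alpha| \leq C\gamma^{m\mu}$, and the product-telescoping identity $\prod_j a_j - 1 = \sum_j a_0\cdots a_{j-1}(a_j-1)$ (applied to $a_j = e^{-s\Delta_j^\alpha}$) yields
$$e^{-sr^n(x_\alpha)} - e^{-sr^n(y_\alpha)} = \sum_{m=1}^n e^{-sr^{n-m}(x_\alpha)}\, e^{-sr^m(f^{n-m}y_\alpha)}\,\bigl(e^{-s\Delta_{n-m}^\alpha} - 1\bigr).$$
To isolate the $|\Im(s)|$ factor without incurring an $e^{|s|}$-type blow-up from Lemma~\ref{exp w}, I split $s = a+ib$: use $|1-e^{-ib\Delta}| \leq |b||\Delta|$ directly for the imaginary part, and absorb the bounded real-part contribution into a constant multiple of $|b|$ using $|a| \leq a_0$ and $|b|\geq b_0$. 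Each $m$-th layer is then controlled by $C|\Im(s)|(\gamma^\mu)^m$ times amplitude factors.

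The crux is to resum the $m$-th layer over $\alpha$ and identify the result with an application of $L^{n-m}_{-sr}$. For each $m\geq 2$, decompose $\alpha = \beta\tau$ with $|\beta|=n-m$ and $|\tau|=m$: the factor $e^{-sr^m(f^{n-m}y_\alpha)} = e^{-sr^m(f_\tau^{-1}(x_i))}$ depends only on the tail $\tau$ (and on $i$), while the $\beta$-dependent part, after replacing $x_{\beta\tau}$ by $y_{\beta\tau} = f_{\beta\tau}^{-1}(x_i)$ at H\"older-controlled cost via Lemma~\ref{r^m}, takes the form of $L^{n-m}_{-sr}$ applied to an auxiliary $\mu$-H\"older function $h_{m,\tau}$ with $\|h_{m,\tau}\|_\mu \leq C |\Im(s)| (\gamma^\mu)^m$. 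Applying the operator norm and summing over $\tau$, the outer sum $\sum_\tau e^{-ar^m(f_\tau^{-1}(x_i))} = L^m_{-ar}\chi_i(x_i)$ is bounded by $C_\varepsilon e^{m(\varepsilon+P(-ar))}$ via Ruelle's theorem on topological pressure, yielding the claimed inequality.

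The main obstacle is precisely this resummation step: the periodic orbit points $x_{\beta\tau}$ depend on the full word, so replacing them with the reference points $y_{\beta\tau}$ produces errors at every iterate that must be telescoped carefully to ensure the final bound on $\|h_{m,\tau}\|_\mu$ is genuinely of order $(\gamma^\mu)^m$ uniformly in $\tau$. A subsidiary technicality is that $L^n_{-sr}\chi_i$ is a priori only defined on $\hat{U}$, so the H\"older extension described in the preliminaries must be invoked to evaluate at the reference points $x_i$; the starting index $m=2$ in the theorem presumably accommodates the edge case where the layer $m=1$ contribution does not fit the stated form and must be absorbed into the constant $C_\varepsilon$ by a separate direct estimate.
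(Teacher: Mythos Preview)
Your iterate-by-iterate telescope does not give you access to the operator norm $\|L^{n-m}_{-sr}\|_\mu$. In the $m$-th layer your summand contains the factor $(e^{-s\Delta_{n-m}^\alpha}-1)$, where $\Delta_{n-m}^\alpha = r(f^{n-m}x_\alpha)-r(f^{n-m}y_\alpha)$ depends on the periodic point $x_\alpha$ of the \emph{full} word $\alpha=\beta\tau$. When you fix $\tau$ and try to view the $\beta$-sum as $L^{n-m}_{-sr}h_{m,\tau}$ evaluated at $f_\tau^{-1}(x_i)$, the would-be function $h_{m,\tau}$ must assign to each preimage $y_{\beta\tau}$ the value $(e^{-s\Delta_{n-m}^{\beta\tau}}-1)$. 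But $x_{\beta\tau}$ jumps discontinuously as $\beta$ varies, so $h_{m,\tau}$ has jumps at every $(n-m)$-cylinder boundary inside $U_i$ and is \emph{not} $\mu$-H\"older on the leaves; hence you cannot invoke $\|L^{n-m}_{-sr}\|_\mu$. Replacing $x_{\beta\tau}$ by $y_{\beta\tau}$ ``at H\"older-controlled cost'' does not help: after the replacement the factor vanishes identically, so the whole content of the layer lives in the replacement error, which again depends on $x_{\beta\tau}$. If instead you take absolute values and pull out $|e^{-s\Delta_{n-m}^\alpha}-1|\le C|s|(\gamma^\mu)^m$, the remaining $\beta$-sum is controlled only by $L^{n-m}_{-ar}\mathbf 1$, i.e.\ by $e^{(n-m)P(-ar)}$, not by $\|L^{n-m}_{-sr}\|_\mu$; that destroys precisely the Dolgopyat gain the lemma is designed to exploit.

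The paper avoids this by telescoping over word \emph{lengths} rather than over iterates of the Birkhoff sum. One writes $Z_n(-sr)=\sum_{|\alpha|=n}L^n_{-sr}\chi_\alpha(x_\alpha)$ and $\sum_i L^n_{-sr}\chi_i(x_i)=\sum_{|\alpha|=1}L^n_{-sr}\chi_\alpha(x_\alpha)$, and then telescopes the difference as $\sum_{m=2}^n\bigl(\sum_{|\alpha|=m}-\sum_{|\alpha|=m-1}\bigr)$. Using $\chi_{\overline\alpha}=\sum_j\chi_{\overline\alpha j}$, the $m$-th layer becomes $\sum_{|\alpha|=m}\bigl[L^n_{-sr}\chi_\alpha(x_\alpha)-L^n_{-sr}\chi_\alpha(x_{\overline\alpha})\bigr]$: a difference of the \emph{same} function at two points in the $(m-1)$-cylinder $U_{\overline\alpha}$. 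Since $L^n_{-sr}\chi_\alpha=L^{n-m}_{-sr}(L^m_{-sr}\chi_\alpha)$ and $L^m_{-sr}\chi_\alpha$ is genuinely $\mu$-H\"older on each leaf for $|\alpha|=m$, the bound $\|L^{n-m}_{-sr}\|_\mu\,\|L^m_{-sr}\chi_\alpha\|_\mu\,d(x_\alpha,x_{\overline\alpha})^\mu$ is immediate, and the rest is the estimate $\sum_{|\alpha|=m}\|L^m_{-sr}\chi_\alpha\|_\mu\le C_\varepsilon|\Im(s)|e^{m(\varepsilon+P(-ar))}$ via the pressure theorem. The point is that packaging everything inside $L^n_{-sr}\chi_\alpha$ keeps the periodic points only as \emph{evaluation points} of a fixed H\"older function, never as parameters inside the function being acted on.
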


Our aim in this article is to give a rigorous proof of this lemma. 

\subsection{Proof}

Fix an arbitrary point $x_i \in U_i$ for each Markov leaf $U_i$. Let $\varepsilon > 0, a_0 > 0$ and $b_0 > 0$.


\begin{prop} Let $m \geq 1$. Given a function $w$ that is $\mu$-\Hol on all cylinders of length $m+1$, the transfer operator of $w$ is always $\mu$-\Hol on all $m$-cylinders. That is,
$$L_{-s r}: \displaystyle\bigoplus_{|\alpha| = m+1} H^\mu (U_\alpha) \rightarrow \displaystyle\bigoplus_{|\alpha| = m} H^\mu (U_\alpha).$$

\begin{proof}
It suffices to show that for any particular $\alpha$, if $w$ is $\mu$-\Hol on $U_{i \alpha}$ for all $i$ with $A(i, \alpha_0)=1$, then $L_{-sr}w$ must be $\mu$-\Hol on $\Int U_\alpha$. 
Let $w \in H^\mu(U_{i \alpha})$ and let $x,y \in \Int U_\alpha$. Then using Lemma \ref{exp w}, we have

\begin{align*}
		&|L_{-sr} w(x) - L_{-sr} w(y)| = \left| \displaystyle\sum_{A(i,\alpha_0)=1} e^{-s r(ix)} w(ix) - \displaystyle\sum_{A(i,\alpha_0)=1} e^{-s r(iy)} w(iy) \right|\\ 
		&\leq \displaystyle\sum_{A(i,\alpha_0)=1} |e^{-sr(iy)}| \left(|e^{sr(iy) - sr(ix)}-1| |w(ix)| + |w(iy) - w(ix)|\right). \\
		&\leq \displaystyle\sum_{A(i,\alpha_0)=1} |e^{-sr(iy)}|  \big(|s||r|_\mu e^{a_0 |r|_\mu |U|^\mu} d(ix,iy)^\mu |w(ix)| + |w|_\mu d(ix,iy)^\mu  \big)\\
		&\leq k e^{a_0 |r|_\infty} \big(|s||r|_\mu e^{a_0 |r|_\mu |U|^\mu} |w|_\infty+ |w|_\mu \big) C_1^\mu \gamma^\mu d(x,y)^\mu \\ 
		&\leq C_4 \|w\|_\mu d(x,y)^\mu,
\end{align*}
for some constant $C_4$. So $L_{-sr} w$ is $\mu$-\Hol on $\Int U_\alpha$. Using the \Hol extension, $L_{-sr} w$ is $\mu$-\Hol on every $U_\alpha$.

\end{proof}
\end{prop}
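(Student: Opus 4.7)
The plan is to prove the Hölder estimate for $L_{-sr}w$ on each individual $m$-cylinder $U_\alpha$ separately, working first on $\Int U_\alpha$ and then extending to the closure. Fix a word $\alpha$ of length $m$. For $x \in \Int U_\alpha$, the $f$-preimages of $x$ are exactly the points $ix := f_i^{-1}(x)$ as $i$ ranges over symbols with $A(i, \alpha_0) = 1$, and each $ix$ lies in the $(m+1)$-cylinder $U_{i\alpha}$ on which $w$ is $\mu$-Hölder by hypothesis. So for $x, y \in \Int U_\alpha$ I would compare $L_{-sr}w(x)$ and $L_{-sr}w(y)$ preimage by preimage.

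For each fixed $i$, the key algebraic identity is
\[
e^{-sr(ix)}w(ix) - e^{-sr(iy)}w(iy) = e^{-sr(iy)}\left[\left(e^{s(r(iy)-r(ix))} - 1\right)w(ix) + \left(w(ix) - w(iy)\right)\right],
\]
which splits the estimate into a piece controlled by the regularity of $r$ and a piece controlled by the regularity of $w$. The prefactor $|e^{-sr(iy)}|$ is bounded by $e^{a_0 |r|_\infty}$ since $|a| \leq a_0$. Since $ix, iy$ both lie in the 2-cylinder $U_{i\alpha_0}$ on which $r$ is $\mu$-Hölder, Lemma \ref{exp w} yields $|e^{s(r(iy)-r(ix))} - 1| \leq |s|\,|r|_\mu e^{a_0 |r|_\mu |U|^\mu} d(ix,iy)^\mu$. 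The Hölder hypothesis on $w$ gives $|w(ix) - w(iy)| \leq |w|_\mu d(ix,iy)^\mu$. Finally, the expanding property (\ref{expanding map}) with $m=1$ gives $d(ix, iy) \leq C_1 \gamma\, d(x,y)$, turning each $d(ix,iy)^\mu$ factor into $C_1^\mu \gamma^\mu d(x,y)^\mu$.

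Summing over the at most $k$ preimages produces a bound of the form $|L_{-sr}w(x) - L_{-sr}w(y)| \leq C_4 \|w\|_\mu d(x,y)^\mu$ on $\Int U_\alpha$, with $C_4$ depending only on $a_0, |r|_\mu, |r|_\infty, |U|, k, C_1, \gamma, \mu$. Extending from $\Int U_\alpha$ to $U_\alpha$ via the Hölder extension recalled in Section 2.2 completes the proof.

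The only real subtlety is bookkeeping about domains: the preimage $ix$ is only well-defined when $x$ lies in the interior of the relevant leaf (cf.\ Definitions \ref{word inverse}), which is why the estimate must be established on $\Int U_\alpha$ and only then extended. Beyond this, the argument is a direct computation combining Lemma \ref{exp w}, the Hölder continuity of $r$ on 2-cylinders, and a single application of the contraction (\ref{expanding map}); there is no deeper analytic difficulty, and the real work of the paper lies in the global estimate of Theorem \ref{main lemma} rather than in this single-step regularity bound.
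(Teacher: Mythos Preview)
Your proposal is correct and follows essentially the same argument as the paper: the same algebraic splitting of $e^{-sr(ix)}w(ix)-e^{-sr(iy)}w(iy)$, the same application of Lemma~\ref{exp w} together with the $\mu$-H\"older regularity of $r$ on 2-cylinders, the same single use of the contraction estimate~(\ref{expanding map}), and the same passage from $\Int U_\alpha$ to $U_\alpha$ via the H\"older extension. Your remark on why one must work on $\Int U_\alpha$ before extending is exactly the point the paper leaves implicit.
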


\begin{cor} \label{corollary 1}
In particular, if $|\alpha| = m$ and $n \geq m$ then $L_{-sr}^n \chi_\alpha \in H^\mu(U_i)$ for all $i = 1, \ldots , k$.
\end{cor}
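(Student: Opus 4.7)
The plan is a direct iteration of the preceding Proposition. First I would observe that, as noted in the subsection on the characteristic function, $\chi_\alpha$ is Lipschitz (and hence $\mu$-\Hol) on every cylinder of length at least $m = |\alpha|$, with $\mu$-constant $0$. In particular $\chi_\alpha \in \bigoplus_{|\beta|=m} H^\mu(U_\beta)$, which is the natural starting point for feeding into the Proposition.

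Next I would apply the Proposition $m-1$ times in succession, with its parameter running through $m-1, m-2, \ldots, 1$. Each application lowers by one the length of the cylinders on which the output is $\mu$-\Hol, so after $m-1$ applications I obtain
$$L_{-sr}^{m-1}\chi_\alpha \in \bigoplus_{i=1}^k H^\mu(U_i).$$

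To reach all $n \geq m$, I would then show that $\bigoplus_{i=1}^k H^\mu(U_i)$ is preserved by $L_{-sr}$. The key observation is that any $w$ in this space automatically lies in $\bigoplus_{|\beta|=2} H^\mu(U_\beta)$, since each 2-cylinder $U_\beta$ is contained in the Markov leaf $U_{\beta_0}$ on which $w$ is already $\mu$-\Hol (and the $\mu$-norm on a subset is no larger). One further application of the Proposition with parameter $1$ then returns $L_{-sr}w$ to $\bigoplus_{i=1}^k H^\mu(U_i)$. A trivial induction on $n-(m-1)$ completes the proof.

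I do not anticipate a real technical obstacle: the argument is pure bookkeeping about how the cylinder length drops by one under each application of $L_{-sr}$ until it stabilises at length $1$. The only point worth flagging is the transition from the ``shrinking cylinder'' regime to the ``stable'' regime on the Markov leaves, and this is handled cleanly by the inclusion $U_\beta \subset U_{\beta_0}$ above.
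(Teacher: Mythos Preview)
Your proposal is correct and is precisely the intended argument: the paper states this result without proof as an immediate corollary of the preceding Proposition, and the natural way to extract it is exactly the iteration you describe --- use $\chi_\alpha \in \bigoplus_{|\beta|=m} H^\mu(U_\beta)$, apply the Proposition repeatedly to step the cylinder length down to $1$, and then observe (via the inclusion $U_\beta \subset U_{\beta_0}$) that $\bigoplus_{i} H^\mu(U_i)$ is $L_{-sr}$-invariant. Your handling of the transition to the stable regime is the only point requiring a sentence of justification, and you supply it.
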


Since the map $f$ is expanding, any cylinder $U_\alpha$ can only contain zero or one $n$-periodic points, that is, a point $x \in U_\alpha$ such that $f^n x = x$ where $n = |\alpha|$. For each string $\alpha$, we have to choose a point $x_\alpha \in U_{\alpha}$ in a particular way. We choose a point from the cylinder as follows:

\begin{enumerate}[(1)] 
\item if $U_\alpha$ has an $n$-periodic point, then let $x_\alpha \in U_\alpha$ be such that $f^n x_\alpha = x_\alpha$;
\item if $U_\alpha$ has no $n$-periodic point and $n > 1$, then choose $x_\alpha \in U_\alpha$ arbitrarily such that $x_\alpha \notin f(U_{\alpha_{n-1}})$;
\item if $|\alpha|=n=1$, then $U_\alpha$ has no $n$-periodic point. Choose $x_\alpha = x_i$ (where $i=\alpha_0$ and $x_i \in U_i$ is one of the points we fixed from the start).
\end{enumerate}

Part of the second condition was introduced by Naud \cite{Naud}. Both \cite{Naud} and \cite{Pollicott-Sharp} leave out the last condition for $|\alpha|=1$, but \cite{Pollicott-Sharp 2} includes it. The need for this condition becomes clear in (\ref{short word}). Note that this is consistent with the other two, that is if $n=1$ and we choose $x_\alpha = x_i$, we still have that $x_\alpha \notin f(U_{\alpha_{n-1}})$ and $x_\alpha$ is not a 1-periodic point (i.e. a fixed point). This is because $A(i,i) = 0$ for all $i$, so there are no fixed points. 


\begin{lem} By choosing $x_\alpha$ in this way, we have 

\begin{displaymath}
   (L_{-sr}^n \chi_\alpha) (x_\alpha) = \left\{
     \begin{array}{lr}
       e^{-sr^n(x_\alpha)} & \emph{if $x_\alpha$ is a periodic point}, \\
       0 & \mathrm{otherwise}.
     \end{array}
   \right.
\end{displaymath} 

\begin{proof}
We have
$$(L_{-sr}^n \chi_\alpha) (x_\alpha) = \displaystyle \sum_{f^n y = x_\alpha} e^{-sr^n(y)} \chi_\alpha(y),$$
so we are looking for points $y$ such that $f^n y = x_\alpha$ and $y \in U_\alpha$.

Suppose $U_\alpha$ has an $n$-periodic point. Then there is exactly one point $y$ satisfying both $f^n y = x_\alpha$ and $\chi_\alpha(y) = 1$. Only $x_\alpha$ satisfies both of these, so we have $y = x_\alpha$. Thus $(L_{-sr}^n \chi_\alpha) (x_\alpha) = e^{-sr^n(x_\alpha)}$ when $U_\alpha$ has an $n$-periodic point.\\

Suppose $U_\alpha$ has no $n$-periodic point. Then $x_\alpha \in U_\alpha$ but $x_\alpha \notin f(U_{\alpha_{n-1}})$. Suppose for a contradiction that there is some $y \in U_\alpha$ s.t. $f^n y = x_\alpha$. Then 
$$y \in U_\alpha \Rightarrow y \in f^{-(n-1)} U_{\alpha_{n-1}} \Rightarrow f^n y \in f(U_{\alpha_{n-1}}) \Rightarrow x \in f(U_{\alpha_{n-1}}).$$
This is a contradiction, so the sum is empty. Thus $(L_{-sr}^n \chi_\alpha) (x_\alpha) = 0$ when $U_\alpha$ has no periodic point.

\end{proof}
\end{lem}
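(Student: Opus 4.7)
The plan is to unfold the definition
\[
(L_{-sr}^n \chi_\alpha)(x_\alpha) = \sum_{f^n y = x_\alpha} e^{-sr^n(y)} \chi_\alpha(y)
\]
and observe that the factor $\chi_\alpha(y)$ restricts the sum to those preimages of $x_\alpha$ under $f^n$ that actually lie in the cylinder $U_\alpha$. The entire statement then reduces to counting the set $\{y \in U_\alpha : f^n y = x_\alpha\}$ in the two cases provided by the selection rule for $x_\alpha$.

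In the first case, $U_\alpha$ contains an $n$-periodic point and $x_\alpha$ is chosen to be that point, so $x_\alpha$ itself is a preimage lying in $U_\alpha$. The essential step is uniqueness: if $y \in U_\alpha$ also satisfies $f^n y = x_\alpha$, then by the definition of $U_\alpha$ both $y$ and $x_\alpha$ have $f^j y, f^j x_\alpha \in U_{\alpha_j}$ for every $0 \leq j \leq n-1$, and they moreover agree at the $n$-th iterate. The expansion estimate (\ref{expanding map}) applied with $m = n$ then yields $d(y, x_\alpha) \leq C_1 \gamma^n \, d(f^n y, f^n x_\alpha) = 0$, forcing $y = x_\alpha$. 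Hence the sum collapses to the single term $e^{-sr^n(x_\alpha)}$.

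In the second case $x_\alpha$ was selected so that $x_\alpha \notin f(U_{\alpha_{n-1}})$. If a preimage $y \in U_\alpha$ existed, then by membership in $U_\alpha$ we would have $f^{n-1} y \in U_{\alpha_{n-1}}$, whence $x_\alpha = f^n y \in f(U_{\alpha_{n-1}})$, contradicting the choice of $x_\alpha$. Thus the summation is empty and $(L_{-sr}^n \chi_\alpha)(x_\alpha) = 0$.

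The only really substantive point is the uniqueness argument in the first case, which is purely an application of the expansion property of $f$ to two points lying in the same $n$-cylinder and having the same $n$-th iterate. The slightly subtle bookkeeping is to check that clause (3) of the choice (the $|\alpha| = 1$ case) is genuinely compatible with clause (2); this holds because the Markov family is built so that $A(i,i) = 0$, so no point of $U_i$ lies in $f(U_i)$ and the arbitrary choice $x_\alpha = x_i$ trivially satisfies $x_\alpha \notin f(U_{\alpha_{n-1}})$. I do not expect any obstacle beyond these two observations.
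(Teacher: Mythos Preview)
Your proposal is correct and follows essentially the same route as the paper: expand the sum, then split into the two cases dictated by the choice of $x_\alpha$, using a contradiction via $f^{n-1}y \in U_{\alpha_{n-1}}$ in the non-periodic case. Your treatment of the periodic case is in fact slightly more detailed than the paper's: the paper simply asserts uniqueness of the preimage in $U_\alpha$ (relying on the remark just before the lemma that an expanding $f$ allows at most one $n$-periodic point per $n$-cylinder), whereas you spell out the expansion estimate (\ref{expanding map}) applied to $y$ and $x_\alpha$ to force $y = x_\alpha$. Your remark about clause (3) being consistent with clause (2) via $A(i,i)=0$ is also exactly what the paper notes just prior to the lemma.
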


\begin{lem}
\label{Z}

$$Z_n(-sr) = \displaystyle \sum_{|\alpha|=n} (L^n_{-sr} \chi_\alpha)(x_\alpha)$$

\begin{proof}

Recall that $Z_n(-sr)$ is a sum over all period $n$ points of $U$. Since $U = \displaystyle \bigcup_{|\alpha|=n} U_\alpha$, we can break up this sum as follows

$$Z_n(-sr) = \displaystyle \sum_{f^n x = x} e^{-s r^n(x)} = \sum_{|\alpha|=n} \sum_{f^n x = x} e^{-s r^n(x)} \chi_\alpha(x).$$


Suppose $U_\alpha$ has an $n$-periodic point. Then it is equal to $x_\alpha$, so we have
$$\sum_{f^n x = x} e^{-s r^n(x)} \chi_\alpha(x) = e^{-s r^n(x_\alpha)} = (L^n_{-sr} \chi_\alpha)(x_\alpha).$$

Suppose $U_\alpha$ has no $n$-periodic points. Then we have
$$\sum_{f^n x = x} e^{-s r^n(x)} \chi_\alpha(x) = 0 = (L^n_{-sr} \chi_\alpha)(x_\alpha).$$

Then,
$$Z_n(-sr) = \displaystyle \sum_{|\alpha|=n} (L^n_{-sr} \chi_\alpha)(x_\alpha).$$

\end{proof}
\end{lem}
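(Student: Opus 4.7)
The plan is to expand the definition of $Z_n(-sr)$, decompose the sum over $n$-periodic points using the partition of $U$ into length-$n$ cylinders, and then invoke the lemma immediately preceding this statement to identify each contribution with $(L^n_{-sr} \chi_\alpha)(x_\alpha)$. The point is that all the substantive work has already been done; what remains is bookkeeping.

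First I would write
$$Z_n(-sr) = \sum_{f^n x = x} e^{-sr^n(x)}$$
and observe that every $n$-periodic point of $f$ lies in $U$. For $n \geq 2$ the length-$n$ cylinders $\{U_\alpha : |\alpha|=n\}$ are pairwise disjoint by the cylinder definition, and for $n=1$ the Markov leaves $U_i$ are themselves disjoint; in either case the indicators $\chi_\alpha$ give a partition of unity on the relevant set of periodic points, so inserting them yields
$$Z_n(-sr) = \sum_{|\alpha|=n} \sum_{f^n x = x} e^{-sr^n(x)} \chi_\alpha(x).$$

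Next, I would fix $\alpha$ with $|\alpha|=n$ and split into two cases. Because $f$ is expanding, $U_\alpha$ contains at most one $n$-periodic point. If it contains exactly one, the choice rule (1) forces it to be $x_\alpha$, so the inner sum reduces to the single term $e^{-sr^n(x_\alpha)}$; if it contains none, the inner sum is empty and equals $0$. The preceding lemma asserts precisely that $(L^n_{-sr}\chi_\alpha)(x_\alpha)$ equals $e^{-sr^n(x_\alpha)}$ in the first case and $0$ in the second, so the two cases match term-by-term. Summing over $|\alpha|=n$ gives the identity. The only thing requiring care is the $n=1$ exception in the choice rule: here one needs $A(i,i)=0$ to know there are no fixed points, so that both sides vanish on each $U_i$ consistently with the choice $x_\alpha = x_i$. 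Beyond this, the argument is purely combinatorial and there is no real analytic obstacle to overcome.
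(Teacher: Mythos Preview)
Your argument is correct and follows essentially the same route as the paper: decompose $Z_n(-sr)$ over length-$n$ cylinders via the indicators $\chi_\alpha$, then use the preceding lemma in the two cases (periodic point present or absent) to identify each inner sum with $(L^n_{-sr}\chi_\alpha)(x_\alpha)$. Your extra remarks on disjointness of cylinders and the $n=1$ case (using $A(i,i)=0$) are sound elaborations but do not change the structure of the proof.
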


Note that, with the third condition we placed on $x_\alpha$, it is trivial to show that 
\begin{equation}
\displaystyle\sum_{i=1}^k (L_{-sr}^n \chi_{U_i})(x_i) = \sum_{|\alpha|=1} (L_{-sr}^n \chi_\alpha) (x_\alpha).
\label{short word}
\end{equation}

\begin{prop}
$$Z_n(-sr) - \displaystyle\sum_{i=1}^k L_{-sr}^n \chi_i (x_i)= \sum_{m=2}^n \left( \sum_{|\alpha|=m} L^n_{-sr} \chi_\alpha (x_\alpha) - \sum_{|\beta|=m-1} L^n_{-sr} \chi_\beta(x_\beta) \right)$$

\begin{proof}
Expand the sum on the RHS and notice that almost all the terms cancel out, leaving only
$$\displaystyle\sum_{|\alpha|=n} (L_{-sr}^n \chi_\alpha) (x_\alpha) - \sum_{|\alpha|=1} (L_{-sr}^n \chi_{U_\alpha})(x_\alpha).$$
Using Lemma \ref{Z} and (\ref{short word}), this is equal to
$$Z_n(-sr) - \displaystyle\sum_{i=1}^k L_{-sr}^n \chi_i (x_i).$$
\end{proof}
\end{prop}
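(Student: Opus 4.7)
The plan is to recognize that the right-hand side is a telescoping sum and to feed it into the two identifications already established, namely Lemma \ref{Z} and equation (\ref{short word}). No new analytic input is required; everything is bookkeeping on top of the lemmas above.

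Concretely, I would first split the summand and rewrite the right-hand side as
$$\sum_{m=2}^{n}\sum_{|\alpha|=m} L^n_{-sr}\chi_\alpha(x_\alpha) \;-\; \sum_{m=2}^{n}\sum_{|\beta|=m-1} L^n_{-sr}\chi_\beta(x_\beta),$$
and then apply the index shift $m\mapsto m+1$ in the second double sum so that it becomes $\sum_{m=1}^{n-1}\sum_{|\beta|=m} L^n_{-sr}\chi_\beta(x_\beta)$. For every intermediate length $2\le m\le n-1$ the contribution of length-$m$ words now appears with both signs and cancels. The only survivors are the $m=n$ slice of the first double sum and the $m=1$ slice of the shifted second sum, leaving
$$\sum_{|\alpha|=n} L^n_{-sr}\chi_\alpha(x_\alpha)\;-\;\sum_{|\beta|=1} L^n_{-sr}\chi_\beta(x_\beta).$$

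To finish, I would invoke Lemma \ref{Z} to rewrite the first surviving sum as $Z_n(-sr)$, and equation (\ref{short word}) to rewrite the second as $\sum_{i=1}^k L^n_{-sr}\chi_i(x_i)$. Subtracting gives exactly the left-hand side of the proposition.

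The only point that deserves a remark is the $m=1$ boundary piece: the matching between $\sum_{|\beta|=1} L^n_{-sr}\chi_\beta(x_\beta)$ and $\sum_{i=1}^k L^n_{-sr}\chi_i(x_i)$ is only legitimate because the third rule for selecting the base points (take $x_\alpha = x_i$ when $|\alpha|=1$) was imposed earlier, which is precisely what makes (\ref{short word}) hold. Aside from keeping track of this convention, there is no substantive obstacle; the argument is purely combinatorial.
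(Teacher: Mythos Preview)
Your proof is correct and follows essentially the same approach as the paper: both recognize the right-hand side as a telescoping sum, cancel the intermediate lengths, and then invoke Lemma \ref{Z} and (\ref{short word}) to identify the surviving $m=n$ and $m=1$ pieces with the left-hand side. Your write-up simply makes the index shift and the role of the $|\alpha|=1$ base-point convention more explicit than the paper does.
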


Note that for any word $\beta$ of length $m-1$, we have $U_\beta = \bigcup_{A(\beta_{m-2},i) = 1} U_{\beta i}$, so for any $x \in M$ we can write 
$$\displaystyle \chi_\beta(x) = \sum_{A(\beta_{m-2},i) = 1} \chi_{\beta i}(x).$$

So we have, for $m \leq n$,
$$\displaystyle \sum_{|\beta|=m-1} L^n_{-sr} \chi_\beta(x_\beta) = \sum_{|\beta|=m-1} \sum_{A(\beta_{m-2},i) = 1} L^n_{-sr} \chi_{\beta i} (x_\beta) = \sum_{|\alpha|=m} L^n_{-sr} \chi_\alpha (x_{\overline{\alpha}}),$$
and therefore
$$Z_n(-sr) - \displaystyle\sum_{i=1}^k L_{-sr}^n \chi_i (x_i)= \sum_{m=2}^n \sum_{|\alpha|=m} \left[ L^n_{-sr} \chi_\alpha (x_\alpha) - L^n_{-sr} \chi_\alpha (x_{\overline{\alpha}}) \right].$$

By Corollary \ref{corollary 1}, $L^n_{-sr} \chi_\alpha$ is $\mu$-\Hol everywhere on $U$. So we have, for all $n \geq 2$,

\begin{align*} 
\left| L^n_{-sr} \chi_\alpha (x_\alpha) - L^n_{-sr} \chi_\alpha (x_{\overline{\alpha}}) \right|
  & \leq \|L^n_{-sr} \chi_\alpha\|_\mu \hspace{2pt} d(x_\alpha, x_{\overline{\alpha}})^\mu \\
  & \leq \|L^{n-m}_{-sr}\|_\mu . \|L^m_{-sr} \chi_\alpha\|_\mu \hspace{2pt} d(x_\alpha, x_{\overline{\alpha}})^\mu,
\end{align*}
where $\| \hspace{4pt} \|_\mu$ is the operator norm derived from the $\mu$-\Hol norm. 

So we have the following estimate for all $n \geq 2$:
\begin{equation} \label{pre-lemma}
\left|Z_n(-sr) - \displaystyle\sum_{i=1}^k L_{-sr}^n \chi_i (x_i) \right| \leq \sum_{m=2}^n \sum_{|\alpha|=m} \|L^{n-m}_{-sr}\|_\mu . \|L^m_{-sr} \chi_\alpha\|_\mu \hspace{2pt} d(x_\alpha, x_{\overline{\alpha}}).
\end{equation}

\subsection{Particular estimates}

The last part of the proof is to find particular estimates for the three parts in the RHS of equation \ref{pre-lemma}. 

\begin{enumerate}

	\item In the low dimensional case, $\mu=1$, so $\|L_{-sr}^{n-m}\|_1$ can sometimes be estimated using an important theorem of Dolgopyat \cite{Pollicott-Sharp}. In this proof we leave it as $\|L_{-sr}^{n-m}\|_\mu$. Later we will show an example of how Dolgopyat's theorem can be used in some cases along with the lemma, to get an extension of the zeta function.	
	\item $\|L_{-sr}^m \chi_\alpha\|_\mu$ is simply estimated by $\|L_{-sr}^m \chi_\alpha\|_\mu \leq C |\Im(s)|$ in the Pollicott-Sharp paper \cite{Pollicott-Sharp}. Naud produces a more detailed estimate that we cover in detail in the next section.
	\item Since $x_\alpha, x_{\overline{\alpha}}$ are in the same cylinder $U_{\overline{\alpha}}$, (\ref{expanding map}) gives us 
	\begin{equation}
	d(x_\alpha, x_{\overline{\alpha}})^\mu \leq C_1^\mu \gamma^{\mu(m-2)} d( f^{m-2} x_\alpha, f^{m-2} x_{\overline{\alpha}} )^\mu \leq C_5 (\gamma^\mu)^m,
	\end{equation}
	for some constant $C_5>0$. 
	
\end{enumerate}

We now fix $m \geq 1$ and attempt to estimate $\|L_{-sr}^m(\chi_\beta)\|_\mu$.

\begin{lem}

\begin{displaymath}
   (L_{-sr}^m \chi_\beta) (x) = \left\{
     \begin{array}{lr}
       e^{-r^m(f_\beta^{-1}x)} & \emph{if $x \in f(U_{\beta_{m-1}})$}, \\
       0 & \mathrm{otherwise}.
     \end{array}
   \right.
\end{displaymath} 

\begin{proof}
We have $(L_{-sr}^m \chi_\beta) (x) = \displaystyle\sum_{f^my=x} e^{-sr^m(y)} \chi_\beta (y)$. 

Suppose $x \in f(U_{\beta_{m-1}})$. Then $f^my=x$ and $\chi_\beta (y)=1$ implies $y = f_\beta^{-1}(x)$. So there is only one non-zero term in this sum, which is $e^{-r^m(f_\beta^{-1}x)}$.

Now suppose $x \notin f(U_{\beta_{m-1}})$. Then if $f^my=x$ and $\chi_\beta (y)=1$, then $f^{m-1}y \in U_{\beta_{m-1}}$, so $x = f^m(y) \in f(U_{\beta_{m-1}})$, which is a contradiction. So the sum is zero.

\end{proof}

\label{L of chi}
\end{lem}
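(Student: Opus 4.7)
The plan is to unfold the definition of the iterated transfer operator and then track which preimages of $x$ actually contribute to the sum, using only the definition of the cylinders $U_\beta$ and the inverse branches $f_\beta^{-1}$ from Definitions \ref{word inverse}.

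First I would write
\[
(L_{-sr}^m \chi_\beta)(x) \;=\; \sum_{f^m y = x} e^{-s r^m(y)} \chi_\beta(y),
\]
which follows by iterating the definition of $L_{-sr}$ (with the cocycle identity $r^m(y) = r(y) + r(fy) + \cdots + r(f^{m-1} y)$ appearing naturally in the exponents). Since $\chi_\beta(y)$ vanishes off of $U_\beta$, only preimages $y$ that lie in $U_\beta$ contribute.

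Next I would establish the easy case: if $x \notin f(U_{\beta_{m-1}})$, the sum is empty. Indeed, any $y \in U_\beta$ satisfies $f^{m-1}y \in U_{\beta_{m-1}}$ by definition of the cylinder, hence $f^m y = f(f^{m-1}y) \in f(U_{\beta_{m-1}})$; so requiring $f^m y = x$ with $y \in U_\beta$ forces $x \in f(U_{\beta_{m-1}})$, a contradiction. This gives the ``$0$'' branch of the piecewise formula.

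For the other case, assume $x \in f(U_{\beta_{m-1}})$. The task is to show that exactly one $y \in U_\beta$ satisfies $f^m y = x$, and that this $y$ is $f_\beta^{-1}(x)$. Existence is immediate from the assumption: pick $z \in U_{\beta_{m-1}}$ with $f z = x$ and follow the inverse branch word-by-word to produce $y = f_\beta^{-1}(x) \in U_\beta$, as in Definitions \ref{word inverse}. Uniqueness is the only point requiring a short argument: if $y, y' \in U_\beta$ both satisfy $f^m y = f^m y' = x$, then $f^j y$ and $f^j y'$ lie in the same Markov leaf $U_{\beta_j}$ for $0 \le j \le m$, so by the expanding estimate (\ref{expanding map}) we get $d(y,y') \le C_1 \gamma^m d(f^m y, f^m y') = 0$, hence $y = y'$. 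Thus exactly one term survives in the sum, giving $(L_{-sr}^m \chi_\beta)(x) = e^{-s r^m(f_\beta^{-1} x)}$.

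I expect no serious obstacle here; the proof is essentially bookkeeping. The only minor subtlety is the boundary case where $x$ lies in the closure of $f(U_{\beta_{m-1}})$ but not its interior, where $f_\beta^{-1}$ was only defined on $\Int U_i$ in Definitions \ref{word inverse}. This is handled by the H\"older extension convention introduced earlier, under which $L_{-sr}^m \chi_\beta$ is extended continuously from $\hat U$ to $U$, matching the formula on the set where $f_\beta^{-1}$ is literally defined.
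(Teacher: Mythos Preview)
Your proposal is correct and follows essentially the same approach as the paper: expand $L_{-sr}^m\chi_\beta$ as a sum over $f^m$-preimages, use the cylinder definition to see that $x\notin f(U_{\beta_{m-1}})$ kills all terms, and identify the unique surviving preimage $f_\beta^{-1}(x)$ otherwise. You add more detail than the paper does---an explicit uniqueness argument via the expanding estimate \eqref{expanding map} and a remark on the boundary/H\"older-extension issue---whereas the paper simply invokes the definition of $f_\beta^{-1}$ for uniqueness; both are fine.
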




For each admissible word $\beta$ with $|\beta| = m$, we fix a point $y_\beta \in f(U_{\beta_{m-1}})$. We will see later how to choose $y_\beta$. Set $z_\beta = f_\beta^{-1}(y_\beta)$.

\begin{lem}\label{total norm of L}
$$\|L_{-sr}^m(\chi_\beta)\|_\mu \leq (e^{a_0 |U|^\mu B_1} + B_1 |s| e^{a_0 |U|^\mu (1 + \gamma^\mu)B_1}) e^{-a r^m(z_\beta)}.$$
\end{lem}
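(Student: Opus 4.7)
The plan is to split $\|L^m_{-sr}\chi_\beta\|_\mu = |L^m_{-sr}\chi_\beta|_\infty + |L^m_{-sr}\chi_\beta|_\mu$, and bound each piece separately; these two bounds will correspond respectively to the two summands in the claimed inequality. The key input throughout is Lemma \ref{L of chi}, which gives the explicit formula $L^m_{-sr}\chi_\beta(x) = e^{-sr^m(f_\beta^{-1}x)}$ on $f(U_{\beta_{m-1}})$ and zero elsewhere. By the Markov property, $f(U_{\beta_{m-1}})$ is a union of whole Markov leaves, so on any single leaf $U_i$ the function is either identically zero or given by the formula everywhere, which avoids the nuisance of $x$ and $y$ straddling the boundary of the support.

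For the infinity norm, I would write
$$e^{-sr^m(f_\beta^{-1}x)} = e^{-sr^m(z_\beta)}\, e^{-s(r^m(f_\beta^{-1}x) - r^m(z_\beta))}$$
and note that $f_\beta^{-1}x$ and $z_\beta = f_\beta^{-1}(y_\beta)$ both lie in the common cylinder $U_\beta$ of length $m$. Lemma \ref{r^m} then controls the exponent difference by $B_1 d(f^{m-1}f_\beta^{-1}x, f^{m-1}z_\beta)^\mu \leq B_1 |U|^\mu$. Taking absolute values (only $\Re(s)=a$ survives in the correction, bounded by $|a|\leq a_0$) yields
$$|L^m_{-sr}\chi_\beta|_\infty \leq e^{a_0 B_1 |U|^\mu}\, e^{-ar^m(z_\beta)},$$
which is exactly the first summand.

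For the Hölder seminorm, take $x,y$ in a leaf $U_i$ on which $L^m_{-sr}\chi_\beta$ is nonzero, and factor
$$L^m_{-sr}\chi_\beta(x) - L^m_{-sr}\chi_\beta(y) = e^{-sr^m(f_\beta^{-1}y)}\bigl(e^{-s(r^m(f_\beta^{-1}x) - r^m(f_\beta^{-1}y))} - 1\bigr).$$
The first factor is dominated by the Step~1 estimate. For the second factor, I would use the elementary inequality $|e^z-1| \leq |z|\, e^{|\Re z|}$ (which is sharper than the naive $|z|e^{|z|}$ and, crucially, keeps $|s|$ out of the exponent). With $z = -s(r^m(f_\beta^{-1}x)-r^m(f_\beta^{-1}y))$ one has $|z| \leq |s| B_1 d(f^{m-1}f_\beta^{-1}x, f^{m-1}f_\beta^{-1}y)^\mu$ by Lemma \ref{r^m}, and then the expanding-map property (\ref{expanding map}) applied one step (since $f$ sends $f^{m-1}f_\beta^{-1}x \mapsto x$) contracts this forward distance back to $d(x,y)$ with a $\gamma^\mu$ saving. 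For $|\Re z| = |a|\cdot|r^m(f_\beta^{-1}x)-r^m(f_\beta^{-1}y)|$ I use the same Lemma \ref{r^m} bound but follow it with the crude estimate $d(f^{m-1}\cdots,f^{m-1}\cdots)^\mu \leq \gamma^\mu |U|^\mu$, contributing the additional $\gamma^\mu$ term in the exponent. Combining with the Step~1 bound on the prefactor produces a Hölder estimate of the shape $B_1|s|\,e^{a_0 B_1 |U|^\mu(1+\gamma^\mu)}\, e^{-ar^m(z_\beta)}$, which is the second summand. Adding the two bounds gives the lemma.

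The main obstacle is careful bookkeeping of constants: the $|s|$ factor must be kept out of the prefactor of $e^{-ar^m(z_\beta)}$ and out of the exponents (which forces the use of $|\Re z|$ rather than $|z|$ in the $e^z-1$ estimate); the prefactor $e^{-ar^m(z_\beta)}$ must be isolated uniformly in $x$; and one has to track where the two $\gamma^\mu$-savings enter to account for the $(1+\gamma^\mu)$ in the exponent — one from the sharpened Lemma \ref{r^m} estimate applied inside $|\Re z|$, the other from the expanding-map contraction converting the forward distance into $d(x,y)^\mu$.
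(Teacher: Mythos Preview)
Your proposal is correct and matches the paper's proof essentially line for line: the same split into $|\cdot|_\infty + |\cdot|_\mu$, the same comparison to $z_\beta$ via Lemma~\ref{r^m} for the sup-norm, and the same factorisation $e^{-sr^m(f_\beta^{-1}y)}(e^{-s(\cdots)}-1)$ for the \Hol seminorm. You are in fact more careful than the paper on two points: you make explicit the sharper bound $|e^z-1|\le |z|\,e^{|\Re z|}$ (the paper cites Lemma~\ref{exp w} but silently uses this refinement to keep $|s|$ out of the exponent), and you justify via the Markov property why $x,y$ in a common leaf are either both in or both out of $f(U_{\beta_{m-1}})$, which the paper's ``Otherwise, both $x$ and $y$ are in $f(U_{\beta_{m-1}})$'' leaves unargued.
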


\begin{proof}
Let $|U| = \displaystyle\max_i \mathrm{diam}(U_i)$ be the largest diameter possible for a Markov leaf. Let $x,y$ be in the same Markov leaf. If $x,y \notin f(U_{\beta_{m-1}})$, then clearly $|L_{-sr}^m(\chi_\beta)(x)| = |L_{-sr}^m(\chi_\beta)(x) - L_{-sr}^m(\chi_\beta)(y)| = 0$. Otherwise, both $x$ and $y$ are in $f(U_{\beta_{m-1}})$. 

$x$ and $y_\beta$ may not be on the same leaf, but $f_\beta^{-1} x$ and $f_\beta^{-1} y_\beta$ are on the same cylinder $U_\beta$. 
So using Lemmas \ref{L of chi} and \ref{r^m}, we have 

\begin{align*}|L_{-sr}^m \chi_\beta(x)| & = |e^{-(a+ib) r^m(f_\beta^{-1} x)}|  = e^{-a r^m(f_\beta^{-1} x)}\\
 													&\leq e^{|a| |r^m(f_\beta^{-1} x) - r^m(f_\beta^{-1} y_\beta)|} \hspace{4pt} e^{-ar^m(f_\beta^{-1} y_\beta)}\\
 													&\leq e^{a_0 B_1 d(f^{m-1} f_\beta^{-1} x, f^{m-1} f_\beta^{-1} y_\beta)^\mu} \hspace{4pt} e^{-a r^m(z_\beta)} \leq e^{a_0 |U|^\mu B_1} \hspace{4pt} e^{-ar^m(z_\beta)}.
\end{align*}

Since $f_\beta^{-1} x$ and $f_\beta^{-1} y$ are on the same cylinder $U_\beta$, and $x,y$ are on the same leaf, we have $|r^m(f_\beta^{-1} x) - r^m(f_\beta^{-1} y)| \leq \gamma^\mu B_1 d(x,y)$. Using Lemma \ref{exp w}, we get

\begin{align*}
|L_{-sr}^m(\chi_\beta)(x) - L_{-sr}^m(\chi_\beta)(y)|	&\leq |e^{-s r^m(f_\beta^{-1} x) + s r^m(f_\beta^{-1} y)} - 1|  |e^{-s r^m(f_\beta^{-1} y)}|\\
																											&\leq \left(|s| B_1 \gamma^\mu d(x,y)^\mu e^{|a| B_1 \gamma^\mu |U|^\mu } \right)  \left( e^{a_0 B_1 |U|^\mu }e^{-ar^m(z_\beta)}\right)\\
																											&\leq B_1 \gamma^\mu |s| e^{a_0 B_1 (\gamma^\mu +1) |U|^\mu} e^{-ar^m(z_\beta)} d(x,y)^\mu.
\end{align*}

Combining these results we get 
\begin{align*}
\|L_{-sr}^m(\chi_\beta)\|_\mu &= |L_{-sr}^m(\chi_\beta)|_\infty + |L_{-sr}^m(\chi_\beta)|_\mu \\
&\leq e^{a_0 |U|^\mu B_1} e^{-a r^m(z_\beta)} + B_1 \gamma^\mu |s| e^{a_0 |U|^\mu (1+\gamma^\mu) B_1}e^{-a r^m(z_\beta)}\\
&\leq (e^{a_0 |U|^\mu B_1} + B_1 \gamma^\mu |s| e^{a_0 |U|^\mu (1+\gamma^\mu) B_1}) e^{-a r^m(z_\beta)}.
\end{align*}

\end{proof}

Now $|s| = |a + b i| \leq |a| + |b|$, where $|a| \leq a_0$ and $|b| = |\Im(s)| \geq b_0$. So there exists a constant $B_2$ such that

$$\displaystyle\sum_{|\beta|=m} \|L_{-sr}^m(\chi_\beta)\|_\mu \leq  B_2 |\Im(s)| \sum_{|\beta|=m} e^{-a r^m(z_\beta)}.$$

We now estimate the sum on the right. Naud estimates it directly, using a well known theorem \cite{sum convergence, Bowen}. However in \cite{sum convergence} this theorem does not involve a sum over words $|\beta|=m$, but a sum over $m$-periodic points $f^m z = z$. In \cite{Bowen}, it's not clear that the theorem holds in the way that Naud claims either, so we show how to recover the estimate another way using the theorem as stated in \cite{sum convergence}. Define a function 

$$\varphi_m(a) = \left( \displaystyle \sum_{f^m z = z} e^{-a r^m(z)} \right)^\frac{1}{m}.$$

\begin{thm} (\cite{sum convergence})
Let $r: U \rightarrow \mathbb{R}$ be the first return map corresponding to the flow $\phi_t$, let $f: U \rightarrow U$ be the expanding map corresponding to the flow and let $a \in I \subset \mathbb{R}$ where $I$ is a closed, bounded set in $\mathbb{R}$ containing $h$, the entropy of the flow. Let $P(-ar)$ be the \textit{pressure} of $-ar$. Then we have $\varphi_m(a) \rightarrow e^{P(-ar)}$ uniformly in $a \in I$ as $m \rightarrow \infty$. Hence, 

$$\ln \varphi_m(a) = \frac{1}{m} \ln \displaystyle \sum_{f^m z = z} e^{-a r^m(z)} \rightarrow P(-ar) \mbox{ as } m \rightarrow \infty,$$
uniformly in $a \in I$.
\end{thm}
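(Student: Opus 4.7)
The plan is to derive this as a consequence of the Ruelle--Perron--Frobenius (RPF) theorem for the transfer operator $L_{-ar}$ acting on $H^\mu(U)$, combined with a bounded-distortion comparison that matches the periodic orbit sum $Z_m(-ar) = \sum_{f^m z = z} e^{-ar^m(z)}$ with the iterates $L_{-ar}^m \mathbf{1}$ of the constant function $\mathbf{1}$. This is the classical route to Bowen's formula in the Ruelle--Bowen thermodynamic formalism.

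First I would use Lemma~\ref{r^m}: for any admissible word $\alpha$ of length $m$ whose cylinder $U_\alpha$ contains a fixed point $z_\alpha$ of $f^m$, we have $|r^m(y) - r^m(z_\alpha)| \leq B_1 |U|^\mu$ for every $y \in U_\alpha$, so that $e^{-ar^m(y)}$ and $e^{-ar^m(z_\alpha)}$ are comparable up to a multiplicative factor depending only on $a_0, B_1, |U|, \mu$, hence uniform in $m$ and in $a \in I$. Writing
$$L_{-ar}^m \mathbf{1}(x) = \sum_{\alpha:\, x \in f^m(U_\alpha)} e^{-ar^m(f_\alpha^{-1} x)}$$
and pairing each term with the periodic contribution in the same cylinder, one obtains two-sided bounds
$$c_1\, Z_m(-ar) \leq L_{-ar}^m \mathbf{1}(x) \leq c_2\, Z_m(-ar),$$
uniform in $x \in U$, in $m$ large, and in $a \in I$. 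Cylinders without periodic points contribute negligibly because aperiodicity of $A$ forces every sufficiently long admissible word to contain a periodic point after a bounded adjustment.

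Next I would invoke the RPF theorem for the expanding Markov map $f$ with \Hol potential $-ar$: there is a simple leading eigenvalue $e^{P(-ar)}$ with strictly positive \Hol eigenfunction $h_a$ and dual eigenmeasure $\nu_a$, and a spectral gap $\delta(a) > 0$ on $H^\mu(U)$. Consequently
$$L_{-ar}^m \mathbf{1}(x) = e^{m P(-ar)}\, h_a(x)\, \nu_a(\mathbf{1}) + O\!\bigl(e^{m(P(-ar) - \delta(a))}\bigr),$$
and taking $\frac{1}{m}\log$ gives $\frac{1}{m}\log L_{-ar}^m \mathbf{1}(x) \to P(-ar)$. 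Combined with the bounded-distortion comparison this yields $\frac{1}{m}\log Z_m(-ar) \to P(-ar)$, i.e.\ $\varphi_m(a) \to e^{P(-ar)}$, pointwise in $a$.

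The main obstacle is the uniformity in $a \in I$. Since $I$ is compact and $a \mapsto -ar$ is linear and continuous into $H^\mu(U)$, the family of potentials has uniformly bounded $\mu$-\Hol norm. Standard Lasota--Yorke (Doeblin--Fortet) inequalities for expanding Markov maps then produce a uniform quasi-compactness estimate, and analytic perturbation theory gives a uniform lower bound $\delta_0 > 0$ on the spectral gap $\delta(a)$ across $I$. The quantities $h_a(x)$ and $\nu_a(\mathbf{1})$ depend continuously on $a$ and so are bounded above and below uniformly on $I$; the comparison constants $c_1, c_2$ are already uniform by construction. Substituting these uniform bounds into the RPF asymptotic upgrades pointwise convergence to uniform convergence on $I$, which is the claimed statement. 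The delicate ingredient throughout is the uniform spectral gap -- the remainder is bookkeeping powered by compactness of $I$.
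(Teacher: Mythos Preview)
The paper does not prove this theorem at all: it is quoted verbatim from Parry--Pollicott \cite{sum convergence} and used as a black box. There is therefore no ``paper's own proof'' to compare your proposal against.

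That said, your sketch is the standard route to this result and is essentially what one finds in Parry--Pollicott or Bowen: bounded distortion (here via Lemma~\ref{r^m}) to compare $Z_m(-ar)$ with $L_{-ar}^m\mathbf{1}$, RPF spectral decomposition to extract the leading term $e^{mP(-ar)}$, and compactness of $I$ together with continuity of the spectral data to upgrade to uniform convergence. One remark on the bounded-distortion step: you need a two-sided comparison, and not every $m$-cylinder carries an $m$-periodic point. The clean way around this (and the device the present paper itself uses a few lines later for a related estimate) is to exploit aperiodicity of $A$: there is a fixed $d$ such that every admissible word of length $m$ extends to a periodic word of length at most $m+d$, so the mismatch costs only a factor $e^{O(d\,a_0|r|_\infty)}$, uniform in $m$ and $a$. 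With that adjustment your argument goes through; the uniform spectral gap over the compact parameter set $I$ is, as you note, the only genuinely delicate ingredient, and it is exactly what Parry--Pollicott supply.
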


Therefore for all $\varepsilon >0$ there exists $N$ such that for all $m \geq N$ and $a \in I$, we have $|\ln \varphi_m(a) - P(-ar)| < \varepsilon$. This gives us 
$$\ln \varphi_m(a) \leq |\ln \varphi_m(a) - P(-ar)| + P(-ar) < \varepsilon + P(-ar),$$
which implies that $\varphi_m(a) \leq e^{\varepsilon + P(-ar)}$ for all $m \geq N$ and $a \in I$.

\begin{thm} \label{B epsilon}
For all $\varepsilon >0$ there exists $B_\varepsilon > 0$ such that for any real $a$ in $I$, and for all $m \geq 1$ we have
$$\left( \displaystyle \sum_{f^m z = z} e^{-a r^m(z)} \right) \leq B_\varepsilon e^{m(\varepsilon + P(-ar))}.$$

\begin{proof} 
Let $b_\varepsilon = \max \{\varphi_m(a)^m : 0 < m < N, a \in A \}$, and let $B_\varepsilon = \max\{b_\varepsilon,1\}$. Then the inequality holds.

\end{proof}
\end{thm}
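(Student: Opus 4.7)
The plan is to split the range of $m$ into a tail $m \geq N$ and a finite head $1 \leq m < N$, where $N = N(\varepsilon)$ is furnished by the uniform convergence already established. That preceding step tells us that for $m \geq N$ and all $a \in I$ we have $\varphi_m(a) \leq e^{\varepsilon + P(-ar)}$, and raising to the $m$-th power immediately yields
$$\sum_{f^m z = z} e^{-a r^m(z)} \;=\; \varphi_m(a)^m \;\leq\; e^{m(\varepsilon + P(-ar))},$$
so any $B_\varepsilon \geq 1$ controls the tail.

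It then suffices to choose $B_\varepsilon$ large enough to absorb the finitely many small indices $1 \leq m < N$. For each such fixed $m$, the quantity $\varphi_m(a)^m$ is a finite sum of exponentials in $a$: via the Markov coding, periodic points of $f$ with $f^m z = z$ correspond to fixed points of $\sigma^m$ on $X_A^+$, of which there are at most $k^m$. Hence $a \mapsto \varphi_m(a)^m$ is continuous on the compact interval $I$, and so is the pressure $a \mapsto P(-ar)$. Since $e^{m(\varepsilon + P(-ar))}$ is continuous and strictly positive on the compact set $I$, it is bounded below by some positive constant, so the ratio $\varphi_m(a)^m / e^{m(\varepsilon + P(-ar))}$ is a continuous function of $a$ on a compact set and therefore attains a finite maximum
$$c_m \;=\; \max_{a \in I} \frac{\varphi_m(a)^m}{e^{m(\varepsilon + P(-ar))}}.$$

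Setting $B_\varepsilon = \max\{1, c_1, c_2, \ldots, c_{N-1}\}$ produces a finite constant that works for every $m \geq 1$: for $m \geq N$ by the tail estimate combined with $B_\varepsilon \geq 1$, and for $1 \leq m < N$ by the definition of $c_m$. The only mildly delicate point—really the sole obstacle worth flagging—is not getting sloppy and replacing $\varphi_m(a)^m$ by an $a$-independent upper bound for small $m$; one must divide through by the exponential factor as well, since $e^{m(\varepsilon + P(-ar))}$ need not be $\geq 1$ on $I$ when $a$ is large. Continuity of pressure together with compactness of $I$ closes this gap cleanly.
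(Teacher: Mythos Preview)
Your proof is correct and follows essentially the same strategy as the paper: use the uniform convergence for $m \geq N$ and handle the finitely many small indices via compactness of $I$. Your version is in fact slightly more careful than the paper's, which simply sets $B_\varepsilon = \max\bigl\{1,\ \max_{0 < m < N,\ a \in I} \varphi_m(a)^m\bigr\}$; as you rightly flag, that naive choice fails when $\varepsilon + P(-ar) < 0$ (which can occur for $a > h$ in $I$), so dividing through by $e^{m(\varepsilon + P(-ar))}$ before maximizing is the correct refinement.
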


Recall that we can choose $y_\beta$, and hence $z_\beta \in U_\beta$ however we like. Since $U_\beta$ may not have an $m$-periodic point, we cannot simply choose $f^m z_\beta = z_\beta$ and use the above theorem. However, $U_\beta$ must have a periodic point of some higher order, because the periodic points of $f$ are dense in $U$. We need a much larger constant to get the required inequality.

\begin{defn}
Let $p(\beta)$ be the smallest integer such that $U_\beta$ has a $p(\beta)$ periodic point $z_\beta$ i.e. $f^p(\beta) z_\beta = z_\beta$. Define $p(m)$ for any $m$ to be the smallest integer such that for all words $\beta$ of length $m$, there exists $z_\beta \in U_\beta$ with $f^{p(m)} z_\beta = z_\beta$. Equivalently, we define 
$$p(m) = \mbox{lcm}\{p(\beta) : |\beta| = m \},$$
where lcm is the lowest common multiple of the set.
\end{defn}

Now if we choose $z_\beta$ so that $f^{p(m)} z_\beta = z_\beta$, then we have 
$$\displaystyle\sum_{|\beta|=m} e^{-a r^{p(m)}(z_\beta)} = \sum_{f^{p(m)}z = z} e^{-a r^{p(m)}(z)}.$$

Finally we write $e^{-a r^m(z_\beta)}$ in terms of $e^{-a r^{p(m)}(z_\beta)}$ to get the estimate we need. For any $m \geq 1$, we have $p(m) \leq m+d$ for some integer constant $d$ that depends only on the matrix $A$. This clearly follows from the irreducibility of $A$. We have
\begin{align*}
r^{p(m)}(z_\beta) &\leq r^{m+d}(z_\beta) = r(z_\beta) + \ldots + r(f^{m+d-1} z_\beta)\\
 &\leq  r^m(z_\beta) + r(f^m z_\beta) + \ldots + r(f^{m+d-1} z_\beta),\\
|r^{p(m)}(z_\beta) - r^m(z_\beta)| &\leq r^d(f^m z_\beta) \leq d |r|_\infty.
\end{align*}

So,
\begin{align*}
\displaystyle\sum_{|\beta|=m} e^{-a r^m(z_\beta)}	&\leq \displaystyle\sum_{|\beta|=m} e^{-ar^{p(m)}(z_\beta)} |e^{-a (r^m(z_\beta) - r^{p(m)}(z_\beta))}|\\
																									&\leq \displaystyle\sum_{|\beta|=m} e^{-ar^{p(m)}(z_\beta)} e^{|a| d |r|_\infty}\\
																									&\leq e^{d |a| |r|_\infty} \displaystyle\sum_{f^{p(m)}z = z} e^{-ar^{p(m)}(z_\beta)}.
\end{align*}

Now we can finally use Theorem \ref{B epsilon}. For all $\varepsilon > 0$, there exists $B_\varepsilon >0$ such that
$$\displaystyle\sum_{|\beta|=m} e^{-a r^m(z_\beta)}	\leq e^{d |a| |r|_\infty} B_\varepsilon e^{(m+d) (\varepsilon + P(-ar))}.$$

We can group the constants together by defining $C_\varepsilon = C_5 B_2 B_\varepsilon e^{d |a| |r|_\infty} e^{d(\varepsilon + P(-ar))}$. Then we get
$$\displaystyle\sum_{|\beta|=m} \|L_{-sr}^m(\chi_\beta)\|_\mu \leq C_5^{-1}C_\varepsilon |\Im(s)| e^{m (\varepsilon + P(-ar))}.$$

Combining this with the third particular estimate, and the inequality (\ref{pre-lemma}), we get the following:

For any set of points $x_1, \ldots, x_k \in U_1, \ldots, U_k$, and for all $\varepsilon > 0$, there exists a constant $C_\varepsilon > 0$ such that,

$$\left|Z_n(-sr) - \displaystyle\sum_{i=1}^k L_{-sr}^n \chi_i (x_i) \right| \leq C_\varepsilon |\Im(s)| \sum_{m=2}^n \left(\|L^{n-m}_{-sr}\|_\mu (\gamma^\mu)^m e^{m (\varepsilon + P(-ar))} \right).$$ 
and this proves Ruelle's Lemma.

\section{Applying Ruelle's Lemma}

Here we explain some of the context of Ruelle's Lemma, and show how it can be used in some cases to estimate the Ruelle zeta function. This was done by Pollicott and Sharp in \cite{Pollicott-Sharp, Pollicott-Sharp 2} and later by Naud \cite{Naud} in different cases. Both estimates apply in less general cases than the conditions for Theorem \ref{main lemma}. In particular they only apply when the stable and unstable manifolds are one-dimensional. Both estimates also depend on special cases of a very important result of Dolgopyat \cite{Dolgopyat 1, Dolgopyat 2}. As an example of how Theorem \ref{main lemma} can be used, we show an estimate of the zeta function in the case considered by Naud \cite{Naud}, which uses the following version of Dolgopyat's theorem.

\begin{thm}(Dolgopyat) \label{Dolgopyat thm}
Let $\phi_t$ be a smooth transitive Anosov flow on a compact Riemannian manifold such that $\phi_t$ is non-integrable and has entropy h, and the local stable and unstable foliations are $C^1$. Then there exists $C_6>0, \varepsilon_2 > 0, b_0 > 0$ and $0 < \rho < 1$ such that whenever $h - \varepsilon_2 \leq \Re(s) \leq h$ and $|\Im(s)| \geq b_0$,
$$\|L_{-sr}^n\|_\Lip \leq C_6 |\Im(s)| \rho^n, \hspace{4pt} \forall n \geq 0.$$
\end{thm}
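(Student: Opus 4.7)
The plan is to follow Dolgopyat's original strategy: construct modified transfer operators on a tailored positive cone and show they contract an $L^2$-norm whenever $|\Im(s)|$ is large. First I would normalize. The Ruelle--Perron--Frobenius theorem gives, for real $a$ near $h$, a simple leading eigenvalue $\lambda_a = e^{P(-ar)}$ of $L_{-ar}$ with a strictly positive Lipschitz eigenfunction $h_a$ and a spectral gap. I would conjugate by $h_a$ and divide by $\lambda_a$, so the normalized operator $\widetilde{L}_{-ar}$ fixes $1$. By perturbation in $a$, the gap persists uniformly for $|a - h| \leq \varepsilon_2$ when $\varepsilon_2$ is small, and the problem reduces to controlling the oscillatory factor $e^{-i b r^n}$ with $b = \Im(s)$.

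Next I would exploit non-integrability of the flow. Combined with the $C^1$ foliations, non-integrability produces a "temporal distance" function whose gradient is non-degenerate in some stable--unstable direction. Concretely, for two distinct inverse branches $f_\alpha^{-1}, f_\beta^{-1}$ of $f^N$ defined on a common cylinder, the difference $r^N \circ f_\alpha^{-1} - r^N \circ f_\beta^{-1}$ has variation of order comparable to the diameter of that cylinder. Choosing $N$ of order $\log|b|$, the exponentials $e^{-i b r^N}$ from distinct branches therefore oscillate on scale $1/|b|$. By covering $U$ by a $1/|b|$-grid and carefully pairing branches, I would show that on a definite fraction of grid squares $Q$, the triangle inequality applied to the complex sum $\widetilde{L}_{-sr}^N w$ loses a uniform factor: $|\widetilde{L}_{-sr}^N w(x)| \leq (1-\eta)\, \widetilde{L}_{-ar}^N |w|(x)$ for $x \in Q$, with $\eta > 0$ independent of $b$.

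These partial cancellations are then codified by Dolgopyat operators $\mathcal{M}_J v = \widetilde{L}_{-ar}^N\bigl((1 - \eta \chi_J) v\bigr)$, indexed by admissible collections $J$ of cancellation squares. Two verifications are needed. First, that $\mathcal{M}_J$ preserves a cone $\mathcal{C}_E$ of positive functions $v$ satisfying $|v(x) - v(y)| \leq E|b|\, v(z)\, d(x,y)$ on cylinders, and that for every $w$ with $|w| \leq v$ and $w$ satisfying the matching Lipschitz bound there is an admissible $J$ with $|\widetilde{L}_{-sr}^N w| \leq \mathcal{M}_J v$. Second, that $\|\mathcal{M}_J v\|_2 \leq \rho_0 \|v\|_2$ for a uniform $\rho_0 < 1$ independent of $b$ and $J$. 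Iterating $\lfloor n/N \rfloor$ times along a telescoping sequence of admissible cut-offs, and converting the $L^2$ contraction back to a Lipschitz bound via the spectral gap of $\widetilde{L}_{-ar}$, would then yield $\|L_{-sr}^n\|_\Lip \leq C_6 |b|\, \rho^n$ with $\rho = \rho_0^{1/N} < 1$.

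The hardest part is the first verification above: cone invariance of $\mathcal{M}_J$ is a delicate balance in which the cut-off depth $\eta$, cone parameter $E$, word length $N$, and grid size $1/|b|$ must all be tuned consistently, and one must check that inserting $1 - \eta \chi_J$ does not destroy the Lipschitz property defining $\mathcal{C}_E$. This is precisely where $C^1$ regularity of the stable and unstable foliations is essential: both the construction of a sufficiently $|b|$-Lipschitz $\chi_J$ and the quantitative form of the temporal distortion estimate require smooth holonomy maps.
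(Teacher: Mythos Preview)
The paper does not prove this statement at all: Theorem~\ref{Dolgopyat thm} is quoted as a black-box result of Dolgopyat, with references to \cite{Dolgopyat 1, Dolgopyat 2} (and to \cite{Stoyanov 1, Stoyanov 3} for related settings), and is then simply applied in the final section. There is therefore no ``paper's own proof'' to compare your proposal against.

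That said, what you have written is a faithful high-level outline of Dolgopyat's original argument --- normalization via Ruelle--Perron--Frobenius, the uniform non-integrability (UNI) condition yielding cancellation between inverse branches on scale $1/|b|$, the Dolgopyat operators $\mathcal{M}_J$ acting on a cone of positive Lipschitz functions, $L^2$ contraction, and the conversion back to a Lipschitz-norm bound via the spectral gap. For the purposes of this paper that level of detail is more than is required; the author is content to invoke the result. If you do want to include a proof, you should be aware that turning your sketch into a rigorous argument is substantial work (the cone-invariance and $L^2$-contraction steps each occupy several pages in \cite{Dolgopyat 2} and in later expositions), and that the precise hypotheses here --- transitive Anosov with $C^1$ foliations and non-integrability --- correspond to the setting of \cite{Dolgopyat 2} rather than the general Axiom~A framework used elsewhere in the paper.
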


A similar estimate for open billiard flows was proved in \cite{Stoyanov 1}, and more general results were proved in \cite{Stoyanov 3}. Since $\Stab{x}$ and $\Unstab{x}$ are $C^1$, the first return map is Lipschitz so we have $\mu = 1$.\\

Let $a = \Re(s) \in [h - \varepsilon_2, h]$ and let $|\Im(s)| \geq b_0$ as in the above theorem. By applying theorems \ref{main lemma} and \ref{Dolgopyat thm}, we get

\begin{align*}
|Z_n(s)|	&\leq |Z_n(s) - \displaystyle\sum_{i=1}^k L_{-sr}^n(\chi_i)(x_i) | + |\sum_{i=1}^k L_{-sr}^n(\chi_i)(x_i)|\\
					&\leq C_\varepsilon |\Im(s)| \sum_{m=2}^n \left( \|L^{n-m}_{-sr}\|_\Lip \gamma^m e^{m (\varepsilon + P(-ar))}\right) + \sum_{i=1}^k \|L_{-sr}^n\|_\Lip \\
					&\leq C_\varepsilon |\Im(s)|  \sum_{m=2}^n \left( C_6 |\Im(s)| \rho^{n-m} \gamma^m e^{m (\varepsilon + P(-ar))}\right) + k C_6 |\Im(s)| \rho^n\\
					&\leq C_6 C_\varepsilon |\Im(s)|^2 \rho^n \sum_{m=2}^n \left(\frac{\gamma}{\rho} e^{\varepsilon + P(-ar)}\right)^m + k C_6 |\Im(s)| \rho^n.
\end{align*}

Although the $\rho$ in Theorem \ref{Dolgopyat thm} is fixed, the inequality in the theorem still holds for larger $\rho$, as long as $\rho < 1$. Thus we can choose $\rho$ so that $1 > \rho > \gamma$. Note that $P(-h r) =0$, so by taking $\varepsilon$ small enough, $a = \Re(s)$ sufficiently close to $h$, say $|a - h| < \delta$ for some $\delta > 0$, and $\frac{\gamma}{\rho}$ sufficiently small, we can have 
$$\frac{\gamma}{\rho} e^{\varepsilon + P(-ar)} < 1.$$
This simplifies the inequality to
$$|Z_n(s)| 	\leq C_6 C_\varepsilon |\Im(s)|^2 \rho^n + k C_6 |\Im(s)| \rho^n \leq C_7 |\Im(s)|^2  \rho^n,$$					
where $C_7 = C_6 C_\varepsilon + k C_6 b_0^{-1}$ is simply another constant. Now we can obtain an estimate for $\zeta(s)$. The Ruelle zeta function \cite{Pollicott-Sharp} is given by

$$\zeta(s) = \exp \left( \displaystyle\sum_{n=1}^\infty \frac{1}{n} Z_n(-sr) \right).$$ 
So we can continue this zeta function analytically to the domain 
$$\{s: |\Re(s) - h| < \min\{\varepsilon_2, \delta\} \mbox{ and } |\Im(s)| \geq b_0 \},$$ 
for some $\varepsilon_2, b_0 > 0$. In this region we have 

$$\exp \left( -\displaystyle\sum_{n=1}^\infty \frac{1}{n} |Z_n(-sr)| \right) \leq |\zeta(s)| \leq \exp \left( \displaystyle\sum_{n=1}^\infty \frac{1}{n} |Z_n(-sr)| \right)$$ 
$$\exp \left( -C_7 |\Im(s)|^2 \displaystyle\sum_{n=1}^\infty \frac{1}{n}  \rho^n \right) \leq |\zeta(s)| \leq \exp \left( C_7 |\Im(s)|^2 \sum_{n=1}^\infty \frac{1}{n}  \rho^n \right)$$ 
$$\exp \left( -C_7 |\Im(s)|^2 \ln\left(\frac{1}{1-\rho}\right) \right) \leq |\zeta(s)| \leq \exp \left( C_7 |\Im(s)|^2 \ln\left(\frac{1}{1-\rho}\right) \right).$$ \\

This demonstrates the application of Ruelle's Lemma to estimating the zeta function.

\footnotesize{UNIVERSITY OF WESTERN AUSTRALIA, CRAWLEY WA, 6009, AUSTRALIA}\\
\footnotesize{\textit{E-mail address: paul.e.wright@uwa.edu.au}}

\end{document}